\documentclass[10pt]{amsart}
\usepackage{amsmath,amssymb,amsthm}
\usepackage{graphicx}
\usepackage{color}
\newtheorem{theorem}{Theorem}[section]    
\newtheorem{lemma}[theorem]{Lemma}
\newtheorem{claim}{Claim} 
\newtheorem{proposition}[theorem]{Proposition}

\newtheorem{corollary}[theorem]{Corollary}
\theoremstyle{definition}
\newtheorem{definition}[theorem]{Definition}
\newtheorem{example}[theorem]{Example}

\newtheorem*{remark*}{Remark}

\newcommand{\R}{\mathbb{R}}

\newcommand{\Crossing}{\!\!
\raisebox{-12pt}{
\begin{picture}(24,28)
\put(0,2){\line(1,1){24}}
\put(24,2){\line(-1,1){24}}
\end{picture} } 
}

\newcommand{\Smooth}{\!\!
\raisebox{-12pt}{
\begin{picture}(24,28)
\qbezier(0,2)(14,14)(0,26)
\qbezier(24,2)(10,14)(24,26)
\end{picture}} 
}

\newcommand{\Smoothh}{\!\!
\raisebox{-12pt}{
\begin{picture}(24,28)
\qbezier(0,2)(12,16)(24,2)
\qbezier(0,26)(12,14)(24,26)
\end{picture}} 
}

 \makeatletter
    
    \@addtoreset{equation}{section}
  \makeatother

\title[Crossing number of links on surface]{On the crossing number of knots and links on surface in 3-manifolds}
\author{Tetsuya Ito}
\address{Department of Mathematics, Kyoto University, Kyoto 606-8502, JAPAN}
\email{tetitoh@math.kyoto-u.ac.jp}

\begin{document}
\begin{abstract}
We give a lower bound of the minimum crossing number of knots and links projected on a 2-sided surface in a 3-manifold using the rank of fundamental groups which means that the crossing number can take arbitrary large values. On the contrary, we show that for a branched surface, the minimum crossing number is always zero. As an application, we give an upper bound of the three-page index by the crossing number.
\end{abstract}
\thanks{The author is partially supported by JSPS KAKENHI Grant Number 23K03110 and 25H00588.} 

\maketitle

\section{Introduction}

Let $M$ be a compact 3-manifold, possibly non-orientable or having non-empty boundaries, and let $F$ be a properly embedded connected surface possibly non-orientable which is 2-sided. Namely, there is an embedding $h:F\times[-1,1] \rightarrow M$ such that $h(p,0)=p$ for all $p \in F \subset M$ and $h(\partial F \times [-1,1]) = \partial h(F \times [-1,1]) \cap \partial M$. 

We say that a link $L$ in $M$ is \emph{projectible on $F$} if one can put $L$ so that it sits in $F\times[-1,1] \subset M$.
For a projectible link $L$, an \emph{$F$-diagram} (or, simply \emph{diagram}) $D$ of $L$ is the image of the projection $\pi:F\times [-1,1] \rightarrow F$ of the link $L$ put in $F \times [-1,1]$, taken so that it has only the transverse double point singularities.  

By a \emph{crossing} and a \emph{region} of a diagram $D$, we mean a double point of $D$ and a connected component of $F \setminus D$, respectively. We denote by $c(D)$ and $R(D)$ the number of crossings and regions of $D$, respectively. We also denote by $\#D$ the number of connected components of $D$ as a subspace of the surface $F$. (This is not necessarily equal to $\#L$, the number of components of the link $L$).

The (minimum) \emph{$F$-crossing number} of a link $L$ in $M$ is defined by
\[ c(L;F\subset M):=\begin{cases}
\min\{ c(D)\:|\: D \mbox{ is an }F\mbox{-diagram of }L\} & \mbox{If }L \mbox{ is projectible on }F \\
\infty & \mbox{Otherwise} \\
 \end{cases}
\]
When $M=S^{3}$ we often write $c(L;F)$ instead of $c(L;F\subset S^{3})$. In particular, when $F=S^{2} \subset S^{3}$ then $c(L;S^2)$ is nothing but the usual crossing number $c(L)$ of a link $L$.

In \cite{oz} Ozawa showed the inequality 
\[ c(K;F)\geq 2(t(K)-\delta(F))+1 \]
for a knot $K$ in $S^{3}$ with $t(K)> \delta(F)$ and a closed surface $F$ in $S^{3}$. Here $t(K)$ is the tunnel number and $\delta(F)=g(M_1)+g(M_2)-g(F)$ is the Heegaard deficiency, where $g(M_1)$ and $g(M_2)$ are the Heegaard genus of connected components of $M_1$ and $M_2$ of $M \setminus F$, and $g(F)$ is the genus of $F$.

As a consequence, he showed that $c(K;F)$ is unbounded for any surface $F$. As mentioned in \cite{oz}, existence of knots with large $F$-crossing number is not obvious as there are many \emph{branched} surfaces supporting all links (such as, Ghrist's universal template \cite{gh}). That is, every knot can be put on the branched surface as a simple closed curve. 

The aim of this note is to give an inequality of the $F$-crossing number based on the fundamental groups and elementary combinatorics of diagrams. Our inequality uses the \emph{rank} $r(G)$ of a finitely generated group $G$, the minimum number of generators of $G$. We define 
\[ r(F \subset M)= \begin{cases}
r(\pi_1(M_+) + r(\pi_1(M_-)) & \mbox{(If } F \mbox{ is separating)} \\
r(\pi_1(M \setminus F))+1 & \mbox{(If } F \mbox{ is non-separating)}
\end{cases}\]
where $M_{\pm}$ are the connected components of $M \setminus F$.

We denote by $G(L)$ the link group of $L$, the fundamental group of the link exterior $E(L) = M \setminus N(L)$ where $N(L)$ is the tubular neighborhood of $L$.
We will introduce a notion of \emph{generating number $x(D)$} of an $F$-diagram $D$ in Section \ref{section:generating}.
Using the generating number we establish the following inequality.

\begin{theorem}
\label{theorem:main} For an $F$-diagram $D$ of a link $L$ in $M$, 
\[ r(G(L)) \leq r(F \subset M)+ x(D) -1 \]
holds.
\end{theorem}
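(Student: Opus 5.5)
The plan is to compute $G(L)$ by a van Kampen decomposition adapted to the product structure $F\times[-1,1]$. This produces a Dehn-type presentation: it has generators from the two sides of $F$ and one generator for each region of $D$. The generating number $x(D)$ of Section~\ref{section:generating} should then measure how many region generators are genuinely needed. Since that section comes after the statement, I will use $x(D)$ as I expect it to be defined. Concretely, I take it to be the minimal number of regions such that every region of $D$ can be reached from them by repeatedly applying the local rule at crossings: among the regions around a crossing, the label of one region is determined by the labels of the others.

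Put $L$ in $N=h(F\times[-1,1])$ with diagram $D$, and fix a base point in $F\times\{1\}$. First suppose $F$ is separating, with $F\times\{1\}\subset \partial M_+$ and $F\times\{-1\}\subset\partial M_-$. Then
\[ E(L)=\overline{M_+}\cup (N\setminus N(L))\cup \overline{M_-}, \]
glued along $F\times\{\pm1\}$, which are connected surfaces. The first step is to give a presentation of $\pi_1(N\setminus N(L))$ relative to $F\times\{1\}$. Choose a base point $*_R$ in each region $R$ of $D$. Let $\gamma_R$ be the loop that runs from the base point to $*_R\times\{1\}$ inside $F\times\{1\}$, goes straight down to $*_R\times\{-1\}$ (this vertical arc misses $L$), and returns through $F\times\{-1\}$ and a fixed vertical arc near the base point. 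The complement $N\setminus N(L)$ is built from $F\times\{1\}$ by attaching these vertical arcs, which are $1$-handles, and then $2$-cells, one for each crossing and each edge of $D$. Hence $\pi_1(N\setminus N(L))$ is generated by $\pi_1(F\times\{1\})$ together with the elements $\gamma_R$. Each crossing contributes a Dehn-type relation among the $\gamma_R$ of its adjacent regions, conjugated by elements of $\pi_1(F)$. The region containing the base point gives $\gamma_{R_0}=1$.

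Applying van Kampen, $G(L)$ is generated by the images of $\pi_1(M_+)$ (which absorbs $\pi_1(F\times\{1\})$), of $\pi_1(M_-)$, and of the $\gamma_R$. The edge relations identify the images of $\pi_1(F\times\{-1\})$ with $\pi_1(F\times\{1\})$ twisted by the $\gamma_R$, so these images are already accounted for by the generators listed. Now take a set $S$ of $x(D)$ regions that generates all regions under the crossing rule, and arrange $R_0\in S$; this is possible because the generating property should not depend on which seed region is chosen. Every $\gamma_R$ then lies in the subgroup generated by $\pi_1(M_+)$, $\pi_1(M_-)$ and $\{\gamma_R : R\in S\setminus\{R_0\}\}$. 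This gives
\[ r(G(L))\le r(\pi_1(M_+))+r(\pi_1(M_-))+x(D)-1 .\]
If $F$ is non-separating, both $F\times\{\pm1\}$ are glued to $M\setminus F$. The gluing becomes an HNN-type extension with one stable letter, for example the loop $\gamma_{R_0}$ closed up through $M\setminus F$. This accounts for the extra $+1$ in $r(F\subset M)$, and the rest of the count is unchanged.

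I expect the main obstacle to be the crossing relation. Around a crossing, the four region elements differ from one another by conjugation by paths in $F\times\{\pm1\}$. I must check that these conjugating elements lie in the images of $\pi_1(M_\pm)$, since otherwise the "three determine the fourth" propagation would not stay inside the subgroup generated by the chosen generators. To handle this, I would route all base-point paths inside $F\times\{1\}$ and $F\times\{-1\}$ only, so that every conjugator automatically lies in $\pi_1(M_+)$ or $\pi_1(M_-)$. I would then verify the local relation carefully in a neighborhood of a single crossing, where $N\setminus N(L)$ is a ball minus two arcs.
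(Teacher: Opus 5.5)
Your argument is correct and is essentially the paper's: the paper glues the slabs over a generating set $X$ first, where the first region gives the amalgam or HNN extension of $\pi_1(M_\pm)$. That is where the $-1$ comes from, just as your $\gamma_{R_0}=1$ does. The paper then shows $\pi_1(M_X)\to G(L)$ is onto by propagating through the Dehn relation at crossings, with the change-of-path conjugators lying in $\pi_1(M_\pm)$ exactly as you arrange by routing base paths in $F\times\{\pm1\}$.

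Two small repairs are needed. First, your claim that generating sets can be reseeded is unjustified; instead choose a minimum generating set $S$ first and put the base point in one of its regions. Second, $\pi_1(N\setminus N(L))$ is not generated by $\pi_1(F\times\{1\})$ and the $\gamma_R$ alone. For a crossingless non-separating curve on a torus it is $\mathbb{Z}\times(\mathbb{Z}*\mathbb{Z})$, while these generate only a $\mathbb{Z}^2$. So you must also include $\pi_1(F\times\{-1\})$ carried along the fixed vertical arc. This is harmless, because its image lies in that of $\pi_1(M_-)$, or in the subgroup generated by $\pi_1(M\setminus F)$ and the stable letter when $F$ is non-separating.
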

We will show inequalities
\[ x(D) \leq \frac{R(D)}{2}+\frac{\#D + 1}{2} \leq \frac{c(D)}{2} +\#D +1\]
in Section \ref{section:generating}. Consequently, we get the following lower bound of $c(L,G\subset M)$.

\begin{corollary}
\label{cor:crossing}$ c(L;F\subset M) \geq 2(r(G(L))- r(F \subset M)-\#D) $
\end{corollary}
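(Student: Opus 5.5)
The corollary follows by combining Theorem \ref{theorem:main} with the two inequalities on the generating number announced above, which are proved in Section \ref{section:generating}. I would take both inequalities as given.

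\textbf{Step 1.} Let $L$ be a link with $c(L;F\subset M)<\infty$; otherwise there is nothing to prove. Choose an $F$-diagram $D$ of $L$ realizing the minimum, so that $c(D)=c(L;F\subset M)$.

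\textbf{Step 2.} Apply Theorem \ref{theorem:main} to $D$ and then the chain
\[ x(D)\le \frac{R(D)}{2}+\frac{\#D+1}{2}\le \frac{c(D)}{2}+\#D+1 . \]
This gives
\[ r(G(L))\le r(F\subset M)+x(D)-1\le r(F\subset M)+\frac{c(D)}{2}+\#D. \]

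\textbf{Step 3.} Rearranging yields
\[ c(D)\ge 2\bigl(r(G(L))-r(F\subset M)-\#D\bigr), \]
and $c(D)=c(L;F\subset M)$ by the choice in Step 1.

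The only subtle point is how $\#D$ is read in the statement of the corollary. $\#D$ depends on the diagram, and the bound is only meaningful for the minimizing diagram $D$ fixed in Step 1; I would state this explicitly. If a diagram-independent version is wanted, note that $\#D\le \#L$, since each component of $D$ is a union of images of components of $L$. Replacing $\#D$ by $\#L$ then gives a uniform, slightly weaker bound that holds for every $L$.

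The main work is really in the inequalities on $x(D)$, in particular the combinatorial step $R(D)\le c(D)+\#D+1$ (an Euler characteristic count on $F$), rather than in this derivation. Since those are established in Section \ref{section:generating}, the corollary itself is a formal consequence.
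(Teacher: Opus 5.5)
Your proposal is correct and is the same derivation the paper intends: apply Theorem \ref{theorem:main} to a crossing-minimizing $F$-diagram $D$, then chain it with Proposition \ref{prop:generating-number} and Lemma \ref{lemma:region-crossing}. Your reading that $\#D$ means the $\#D$ of that minimizing diagram (with $\#D\le\#L$ giving a diagram-independent version) is the right one, since for an arbitrary diagram the bound can fail, e.g.\ for a connected diagram of an $n$-component unlink on $S^2\subset S^3$; one minor aside: the paper proves $R(D)\le c(D)+\#D+1$ by smoothing crossings one at a time while keeping $\#D$ fixed, whereas an Euler characteristic count on $F$ by itself only yields the opposite inequality $R(D)\ge c(D)+\chi(F)$.
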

This is a generalization of inequality $\frac{c(K)}{2} +1 \geq r(G(K))$ for knots in $S^{3}$.

For every surface $F$ there are projectible knots and links whose knot group has arbitrary large rank. For example, take a knot in the 3-ball so that its knot group has large rank. Thus we get the following unboundedness result.

\begin{corollary}
\label{cor:unbounded}
For every 2-sided surface $F$ in $M$ and $n >0$, there exists a projectible knot $K$ such that $c(K;F\subset M)>n$.
\end{corollary}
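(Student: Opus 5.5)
We will derive this from Corollary~\ref{cor:crossing}, applied to a family of knots confined to a small ball in $F\times[-1,1]$ whose knot groups have unbounded rank.

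First we fix the setting. Choose a small disk $\Delta$ in the interior of $F$. The set $B=h(\Delta\times[-1,1])$ is a $3$-ball in $M$. Any knot $K$ placed in $B$ is projectible on $F$, and any ordinary knot diagram of $K$ drawn in $\Delta$ is an $F$-diagram. For a knot, every $F$-diagram $D$ is connected, so $\#D=1$. Applying Corollary~\ref{cor:crossing} to a minimizing diagram then gives
\[ c(K;F\subset M)\geq 2\bigl(r(G(K))-r(F\subset M)-1\bigr). \]
Since $M$ is compact, the pieces of $M\setminus F$ have finitely generated fundamental groups, so $r(F\subset M)$ is a fixed finite number. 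It therefore suffices to find knots $K\subset B$ with $r(G(K))$ arbitrarily large.

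Next we compare $G(K)$ with the classical knot group. The exterior $E(K)$ is the union of $B\setminus N(K)$ and $M\setminus \mathrm{int}\,B$, glued along a $2$-sphere. By van Kampen,
\[ G(K)\cong \pi_1(S^3\setminus K)\ast \pi_1(M\setminus \mathrm{int}\,B)\cong \pi_1(S^3\setminus K)\ast\pi_1(M), \]
where we regard $K\subset B\subset S^3$. Grushko's theorem then gives $r(G(K))=r(\pi_1(S^3\setminus K))+r(\pi_1(M))\geq r(\pi_1(S^3\setminus K))$.

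It remains to find classical knots whose groups have large rank. We take $K_m=\#^m(\text{trefoil})$, drawn inside $\Delta$. Suppose $G=\pi_1(S^3\setminus K_m)$ is generated by $r$ elements. Then, after choosing a meridian as one generator, the Alexander module $G'/G''$ is generated as a $\Lambda=\mathbb{Z}[t^{\pm1}]$-module by $r-1$ elements. The Alexander module of $K_m$ is $\bigoplus^m \Lambda/(t^2-t+1)$. Tensoring with the field $k=\mathbb{F}_p[t]/(t^2-t+1)$, for a prime $p$ with $t^2-t+1$ irreducible mod $p$ (e.g.\ $p=2$), yields $k^m$, which needs $m$ generators. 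Hence $r(G)\geq m+1$, and choosing $m>n/2+r(F\subset M)$ gives $c(K_m;F\subset M)>n$.

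The only step requiring real care is this lower bound on the rank. One has to justify that a generating set of $G$ can be arranged to contain a meridian, so that $G'/G''$ is generated by the remaining $r-1$ elements. If that is awkward, the fallback is to pass to the $p$-fold cyclic cover and bound the rank of its first homology. Everything else is formal from Corollary~\ref{cor:crossing}, van Kampen, and Grushko.
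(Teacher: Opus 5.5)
Your proposal is correct and is essentially the paper's own argument: apply Corollary~\ref{cor:crossing} with $\#D=1$ to knots inside a small ball in $F\times[-1,1]$ whose groups have unbounded rank. You also fill in details the paper leaves implicit, namely the splitting $G(K)\cong\pi_1(S^3\setminus K)\ast\pi_1(M)$ and the explicit family $\#^m$(trefoil). The step you flag is harmless: Nielsen moves on a minimal generating set make one generator map to $\pm1$ in $H_1$ and put the rest in $G'$, which is all the Alexander-module count needs to give $r\geq m+1$; simply adjoining a meridian, which gives $r\geq m$, would already suffice.
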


One can use almost the same argument for more general situations which are left as further questions in \cite{oz}. Furthermore, since the generating number $x(D)$ reflects the property of diagrams, one can use Theorem \ref{theorem:main} to give a constraint for a knot to admit $F$-diagram of certain types. In particular, our argument gives unified and general version of various estimates of diagrammatic quantity in terms of rank of knot groups.

We will also discuss knots diagrams on branched surface in Section \ref{section:branch}. We show that the $F$-crossing of projectible link is always zero for branched surface (Theorem \ref{theorem:appendix}) which makes a sharp contrast with Corollary \ref{cor:unbounded}. As an application, we prove the following inequality of three-page index $\alpha_3(L)$ (see Definition \ref{definition:3-page}) that answers \cite[Conjecture 1]{yo} affirmatively.

\begin{theorem}\label{theorem:3-page-inequality}
For a non-split link $L$ in $S^{3}$, $\alpha_3(L) \leq \frac{23}{8}c(L) + \frac{3}{4}$.
\end{theorem}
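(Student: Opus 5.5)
The plan is to start from a minimal crossing diagram $D\subset S^{2}$ of $L$ with $c(D)=c(L)=c$. I will then turn $D$ into a three-page presentation of $L$ (in the sense of Definition \ref{definition:3-page}), keeping track of how often the link meets the binding. Since $L$ is non-split, $D$ is connected. Euler characteristic then gives $2c$ edges and $c+2$ regions, and these are the only global quantities the count will use.

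\textbf{Step 1: pages 1 and 2.} Choose a simple closed curve $\gamma\subset S^{2}$ in general position with respect to $D$, and take it as the binding. The two disks of $S^{2}\setminus\gamma$ become pages $1$ and $2$, and a third half-disk glued along $\gamma$ becomes page $3$. This is the branched surface picture of Section \ref{section:branch}.

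\textbf{Step 2: handling crossings.} I will use the mechanism behind Theorem \ref{theorem:appendix}. At a crossing of $D$ lying in page $1$ or $2$, the over-arc is lifted off the surface and routed through page $3$. This removes the crossing at the cost of a bounded number of new binding points, namely the over-arc has to reach $\gamma$ and come back. Arcs of $D$ that merely run from one side of $\gamma$ to the other contribute one binding point each.

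\textbf{Step 3: the local cost.} So $\alpha_3(L)$ is at most the sum of two counts:
\begin{itemize}
\item $|\gamma\cap D|$, and
\item the total number of detour points needed at the crossings.
\end{itemize}
The real content is to choose $\gamma$ so that each crossing is cheap. For a crossing with $\gamma$ passing right next to it, I would first push $\gamma$ through it. Then the over-strand can be dragged into page $3$ with only $O(1)$ extra binding points, and a careful local analysis should give a fixed cost for each crossing type.

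\textbf{Step 4: choosing $\gamma$.} I would build $\gamma$ as the boundary of a regular neighbourhood of a spanning tree $T$ in the checkerboard (Tait) graph of $D$, or alternatively as a curve visiting regions along a spanning tree of the dual graph. The edge and region counts above then bound $|\gamma\cap D|$ linearly in $c$, with an additive constant coming from the $c+2$ regions. Next I split the crossings into those on which $\gamma$ passes "well" and those it passes "badly", and assign each type its local cost from Step 3. Optimizing over the choices, for instance choosing which checkerboard colour to use, or averaging over the two colourings and the orientations of $\gamma$ at each crossing, should produce a weighted average of the form $\tfrac{23}{8}c$ plus a constant. The constant would come from the Euler characteristic term and would be arranged to equal $\tfrac34$.

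\textbf{Main obstacle.} I expect the hard part to be the optimization in Step 4 that yields exactly $23/8$. This needs two things:
\begin{itemize}
\item a sharp local analysis of the detour cost at each crossing type, relative to how $\gamma$ passes it;
\item a global averaging or discharging argument showing that a suitable $\gamma$ achieves the average cost.
\end{itemize}
I would first try the averaging over the two checkerboard colourings combined with a choice of spanning tree. If that is not sharp enough, I would refine it by letting $\gamma$ cut through some crossings directly, which saves binding points at those crossings.
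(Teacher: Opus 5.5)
There is a genuine gap. Steps 3 and 4 contain no computation, so nothing in your argument produces the constant $\frac{23}{8}$.
\begin{itemize}
\item You never fix a local cost per crossing.
\item You never bound $|\gamma\cap D|$ for your spanning-tree curve.
\item Saying that averaging ``should produce'' $\frac{23}{8}c$, with a constant ``arranged to equal $\frac34$'', leaves the entire content of the theorem unproved.
\end{itemize}
Step 2 is also unjustified as stated. For a crossing far from $\gamma$, the over-arc cannot be routed into page $3$ at $O(1)$ cost, because reaching the binding inside page $1$ or $2$ creates new crossings. You need to remove crossings in an order in which each one is adjacent to the binding when it is removed. That is exactly what the sequential push-down moves of Theorem \ref{theorem:appendix} provide.

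The paper's argument has three ingredients, none of which appears in your plan.
\begin{itemize}
\item \textbf{Baseline.} Put $D$ in the vertical page. Pushing one crossing onto the binding gives $4$ binding points. Each later push-down move costs at most $3$, so $\alpha_3\le 4+3(c-1)=3c+1$.
\item \textbf{Saving.} Take an independent set $B$ of vertices of the Tait graph $\mathbb{G}(D)$. One can arrange that each $R\in B$ meets the binding in a single arc disjoint from the diagram when its last corner is removed. That crossing then costs only $2$. This gives Theorem \ref{theorem:3-page}: $\alpha_3(D^*)\le 3c(D)+1-\alpha(\mathbb{G}(D))$.
\item \textbf{Counting.} By the four-colour theorem, a planar graph has an independent set containing at least a quarter of its vertices. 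Choosing the checkerboard colour class with more of the $c+2$ regions gives at least $1+\frac c2$ vertices. Hence $\alpha(\mathbb{G}(D))\ge\frac14+\frac c8$, and $3c+1-\frac14-\frac c8=\frac{23}{8}c+\frac34$.
\end{itemize}
Your averaging over the two colourings accounts only for the factor $\frac12$. The factor $\frac14$ comes from the independent-set bound, and nothing in your spanning-tree construction supplies it. Within this scheme even the five-colour theorem is not enough: it gives $3-\frac1{10}>\frac{23}{8}$.
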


\section{Crossing number of $F$-diagrams}

\subsection{Generating number}
\label{section:generating}

The notions and results in this section concern combinatorics of diagrams so they do not depend on how $F$ is embedded in $M$.

For an $F$-diagram $D$ of a link $L$, let $\mathcal{R}(D)$ be the set of its regions. We call the four quadrants around a crossing of $D$ the \emph{corner} of the diagram and we say that a corner $x$ \emph{belongs to a region $R$} if $x \subset R$.

\begin{definition}[Region derived from other regions]
\label{definition:derived}
A subset $X$ of $\mathcal{R}(D)$, we say that a region $R$ is \emph{derived from $X$} if there exists a crossing $c$ such that for the four corners around $c$, exactly one corner belongs to the region $R$ and that the other three corners belong to regions in $X$.
\end{definition}

We denote by 
\[ D(X)=X \cup \{\mbox{regions that are derived from }X\}.\]
For $i=0,1,\ldots,$ we inductively define $X=D^{0}(X) \subset D^{1}(X) \subset \cdots \subset D^{i}(X) \subset D^{i+1}(X) \subset \cdots$
by $D^{i}(X) = D(D^{i-1}(X)$ for $i>0$. We put 
\[ D^{\infty}(X) = \bigcup_{i=0}^{\infty} D^{i}(X). \]
Since $\mathcal{R}(D)$ is a finite set, $D^{\infty}(X)=D^{N}(X)$ for sufficiently large $N$.

\begin{definition}[Generating number]
We say that a set of regions $X \subset \mathcal{R}(D)$ is \emph{generating} if $D^{\infty}(X)=\mathcal{R}(D)$. We define the \emph{generating number} $x(D)$ of $D$ by 
\[ x(D) = \min \{\#X \: | \: X \subset \mathcal{R}(D) \mbox{ is generating} \}\]
\end{definition}

It is obvious that $x(D) \leq R(D)$ but as we mentioned in introduction, we have the following better bound.

\begin{proposition}
\label{prop:generating-number}
$x(D) \leq \frac{1}{2}R(D) +\frac{\#D+1}{2}$
\end{proposition}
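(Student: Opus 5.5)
The plan is to build a generating set greedily while traversing the diagram, and to count how many regions each added element makes known. Call a region \emph{known} if it lies in $D^{\infty}(X)$ for the current set $X$. The derivation rule gives the basic step: at a crossing, if three of the four corners are known, the fourth is known too. The target bound $\frac{1}{2}R(D)+\frac{\#D+1}{2}$ suggests an accounting in which every element put into $X$ makes at least two new regions known, up to an overhead of $1$ for the first connected component of $D$ and $\frac12$ for each further one.

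\textbf{One connected component.} View a connected component $D_0$ of $D$ as a union of immersed closed curves, one for each link component projecting into it, and traverse these curves one at a time, going straight through each crossing.

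\begin{enumerate}
\item Start on an edge of the first curve and put the two regions on either side of it into $X$. This uses $2$ elements and makes $2$ regions known, or $1$ if both sides are the same region.
\item Maintain the invariant that the regions to the left and to the right of the current edge are known. Reaching a crossing, the two corners behind us are then known.
\begin{itemize}
\item If one of the two forward corners is already known, the other is derived for free.
\item If neither is known, add one of them to $X$; the other is then derived. This step makes two new regions known for one element of $X$.
\end{itemize}
Either way the invariant persists past the crossing.
\item Since $D_0$ is connected, order its curves so that each subsequent curve crosses an already traversed one. Start that curve at such a crossing. All four corners there were made known when the earlier strand passed through, so the invariant holds at the start at no cost.
\end{enumerate}

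After $D_0$ is fully traversed, both sides of every edge of $D_0$ are known. Counting, the number of elements used is at most $2+\frac{1}{2}(R_0-2)=\frac{R_0}{2}+1$, where $R_0$ is the number of regions adjacent to $D_0$. A crossingless curve has no crossing steps and is handled by its start alone, so the count still holds.

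\textbf{Further components.} Since $F$ is connected, some untraversed component $D_1$ of $D$ is adjacent to a region that is already known. Start its traversal on an edge having that known region on one side, and add only the region on the other side to $X$, if it is not already known. This costs $1$ element and yields at least $1$ new region, so the cost is at most (new regions)$/2+\frac12$. The traversal of $D_1$ then proceeds as above, where each added element yields at least two new regions. Summing over the $\#D$ components gives
\[
\#X\le \frac{R(D)}{2}+1+\frac{\#D-1}{2}=\frac{R(D)}{2}+\frac{\#D+1}{2}.
\]
Every region of $F\setminus D$ meets $D$ (if $D\neq\emptyset$), so every region borders some traversed edge and is known at the end; hence $X$ is generating.

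\textbf{Main obstacle.} The delicate point is the bookkeeping when a region has already been made known elsewhere. A region can touch several components or be revisited many times along the walk. Revisits are harmless, since they only turn steps into free derivations, but one must verify that whenever an element is added to $X$, the region derived from it is genuinely new. In the case where neither forward corner is known, the two forward corners could be the same region. In that situation I would add that region alone: one new region for one element, which would break the count of two new regions per element. I would handle it by noting that such a coincidence forces the derived corner to be known already after the addition, and then choosing instead to add a region behind or ahead along the walk. Failing that, I would show these degenerate crossings can be charged against the initial slack, namely the edge-start step when its two sides coincide.
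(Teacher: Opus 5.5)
\textbf{There is a genuine gap, and it cannot be closed: it sits exactly at the degenerate case you flag, and in that case the statement itself is false.}

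The two forward corners at a crossing are the quadrants on either side of the outgoing edge. They lie in one region $R$ exactly when that edge has $R$ on both sides. Then $R$ occupies two corners of the crossing, so by Definition~\ref{definition:derived} it can never be derived there, and you pay one element for one region. Your count has slack of at most $\frac12$ per component, so it absorbs at most one such crossing per component, but there can be arbitrarily many of them.

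Here is a counterexample to the inequality once such edges occur. On $T^2=\mathbb{R}^2/\mathbb{Z}^2$ let $D$ be the union of the $n\ge 3$ circles $y=k/n$ ($0\le k<n$) and the circle $x=0$.
\begin{itemize}
\item $\#D=1$, and the regions are the $n$ open rectangles $P_k=(0,1)\times(k/n,(k+1)/n)$.
\item Every edge of the circle $x=0$ has the same rectangle on both sides, so the four corners at the crossing $(0,k/n)$ lie in $P_k,P_k,P_{k-1},P_{k-1}$ (indices mod $n$).
\item No region ever occupies exactly one corner of a crossing, so $D(X)=X$ for every $X$.
\end{itemize}
Hence $x(D)=R(D)=n>\frac n2+1=\frac12R(D)+\frac{\#D+1}{2}$. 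No choice of which region to add can help, since nothing is ever derived.

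The paper's proof has the same hole. Its estimates $\#D^{\infty}(X_0)\ge\#X_{-1}+4$ and $\#D^{\infty}(X_{i+1})\ge\#D^{\infty}(X_i)+2$ tacitly assume that the corners involved lie in distinct regions. In the example this fails from the start: $\#D^{\infty}(X_0)=2$, and at every later step the regions containing $y_3$ and $y_4$ coincide.

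What your argument does prove is the proposition under the extra hypothesis that no edge of $D$ has the same region on both sides. This holds automatically when $F$ is planar, e.g.\ $F=S^2$. Indeed, a loop crossing $D$ once transversally at an interior point of an edge would meet the closed curve of $D$ through that point an odd number of times, which is impossible on a genus-$0$ surface.

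Under that hypothesis your argument goes through:
\begin{itemize}
\item your starting edge has two distinct sides;
\item every crossing step that spends an element gains two new regions;
\item your component-by-component count $\frac{R(D)}{2}+1+\frac{\#D-1}{2}$ is valid.
\end{itemize}
It even handles disconnected diagrams more carefully than the paper. The paper's iteration needs a crossing with exactly two known corners, which need not exist; two split trefoil diagrams on $S^2$ give an example. Without such a hypothesis, however, the statement needs to be changed, because as written it is false.
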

\begin{proof}
Let us call a region $R$ \emph{cornerless} if $R$ contains no corners, that is, $\partial R$ contains no crossings.
Let $X_{-1}$ be the set of cornerless regions.

If all regions are cornerless, then the generating set is equal to $X_{-1}$.
Since $R(D) \leq \#D +1$, we get
\[ x(D) = \#X_{-1}=R(D) \leq \frac{1}{2}R(D) + \frac{\# D +1}{2}. \]

So in the following we assume that there exists at least one corner (crossing). 
For $i=0,1,\ldots,$ we inductively construct a set $X_i$ having the property that
\begin{equation}
\label{eqn:X_i}
\# X_i \leq \#X_{-1}+3+i, \quad \#D^{\infty}(X_i) \geq \#X_{-1}+4+2i 
\end{equation}
until we arrive at a generating set.

To construct $X_0$, we pick a crossing $c$ of $D$. Let $y_1,y_2,y_3,y_4$ be the corners around $c$ and let $R_1,R_2,R_3,R_4$ be the regions that contains $y_1,y_2,y_3,y_4$, respectively. Note that it can happen that $R_i=R_j$ for $i\neq j$.  
We put $X_0=X_{-1}\cup\{R_1,R_2,R_3\}$. Then $\# X_0 \leq \#X_{-1} + 3$ and $D(X_0) \subset \{R_1,R_2,R_3,R_4\}$ so $\#D^{\infty}(X_0) \geq 4 +\# X_{-1}$ 

For $i\geq 0$, assume that we have constructed a set of regions $X_{i}$ that satisfies \eqref{eqn:X_i}. When $X_{i}$ is not generating, we construct $X_{i+1} \supset X_i$ as follows.

Pick a crossing $c$ of $D$ so that among its four corners $y_1,y_2,y_3,y_4$ around $c$, exactly two corners (say, $y_1$ and $y_2$) are contained in the region in $D^{\infty}(X_i)$. We take a region $R$ that contains $y_3$ and we put $X_{i+1}=X_{i}\cup \{R\}$.
Let $R'$ be the region that contains the remaining corner $y_4$. By definition
\[ R' \in D(D^{\infty}(X_{i}) \cup R) \subset D^{\infty}(X_{i+1})   \]
Hence $\# D^{\infty}(X_{i+1}) \geq \#D^{\infty}(X_i) +2 \geq 4+2(i+1)$.

If $X_i$ is generating, then $\#X_{-1}+4+2i \leq \# D^{\infty}(X_i)= R(D)$ so
\begin{align*}
 \#X_{i}
& \leq \#X_{-1}+3 + \frac{1}{2} R(D) - \frac{1}{2}\# X_{-1} = \frac{1}{2} R(D)  + \frac{1}{2}\#X_{-1} +1.
\end{align*}
Since not all the regions are cornerless, $\# X_{-1} \leq \#D-1$ hence we conclude that the generating set $X_i$ satisfies $\# X_{i} \leq \frac{1}{2}R(D) +\frac{\#D+1}{2} $.
\end{proof}

Finally we make the following observation that relates $R(D)$ and $c(D)$.

\begin{lemma}
\label{lemma:region-crossing}
For an $F$-diagram $D$, $R(D) \leq c(D) +  \#D + 1$. 
\end{lemma}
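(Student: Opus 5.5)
We prove $R(D) \leq c(D) + \#D + 1$ by an Euler characteristic count on $F$, handling the crossingless components of $D$ separately.

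\emph{Step 1: reduce to a connected graph with vertices.} Let $D_1,\dots,D_k$ be the components of $D$ that contain at least one crossing, and let $m$ be the number of crossingless components, so $\#D = k+m$. Each crossingless component is an embedded circle, and cutting $F$ along it increases the number of complementary components by at most one. We add these circles one at a time, after first handling the $D_i$. Then it suffices to show that the complement of $D_1\cup\dots\cup D_k$ has at most $c(D)+k+1$ components.

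\emph{Step 2: treat $D_1\cup\dots\cup D_k$ as a graph and use Euler characteristic.} Regard $G = D_1\cup\dots\cup D_k$ as a $4$-valent graph with $V=c(D)$ vertices and $E=2V$ edges, so $\chi(G)=V-E=-c(D)$. Let $N$ be a regular neighborhood of $G$ in $F$. Then $\chi(N)=-c(D)$, and $N$ has $k$ components. The regions of $G$ correspond to the components of $F' = \overline{F\setminus N}$. Gluing along circles does not change Euler characteristic, so
\[
\chi(F)=\chi(N)+\chi(F') = -c(D)+\chi(F').
\]

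\emph{Step 3: bound the number of components of $F'$.} Let $F'_1,\dots,F'_r$ be the components of $F'$. The key inequality, and the main obstacle, is that $\chi(F') \le r$ does not by itself give a good bound, so we count with the gluing graph instead. Form the bipartite graph $\Gamma$ whose vertices are the $k$ components of $N$ and the $r$ components of $F'$, with one edge for each boundary circle of $N$. Since $F$ is connected, $\Gamma$ is connected, so the number $b$ of boundary circles satisfies $b\ge k+r-1$.

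Next, each $F'_j$ has Euler characteristic at most $2-(\text{its number of boundary circles})$. Summing over $j$ gives $\chi(F')\le 2r-b\le 2r-(k+r-1)=r-k+1$. Combining with Step 2,
\[
\chi(F)\le -c(D)+r-k+1.
\]
This gives the lower bound $r\ge \chi(F)+c(D)+k-1$, which points the wrong way.

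So instead we bound $r$ from above, using the fact that each region of $G$ is a component of $F\setminus G$. Write $\beta_1$ for the first Betti number. The count is
\[
r \le 1+\beta_1(G) = 1 + E - V + k = c(D)+k+1.
\]
This holds because each edge added to a growing spanning structure increases the number of complementary components by at most one. More precisely, build $G$ as follows:
\begin{itemize}
\item Start with a spanning forest of the $k$ components. A tree has connected complement in a connected surface, so deleting it gives $1$ component.
\item Add the remaining $E-V+k=c(D)+k$ edges one at a time. Each added arc meets the current complement inside a single component, which it cuts into at most $2$ pieces.
\end{itemize}
Thus $r\le 1+c(D)+k$.

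\emph{Step 4: combine.} Re-adding the $m$ crossingless circles as in Step 1 gives
\[
R(D)\le c(D)+k+m+1 = c(D)+\#D+1,
\]
which is the claimed bound.
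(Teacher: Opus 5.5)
Your proof is correct, and it takes a different route from the paper's. Step 2 and the Euler-characteristic half of Step 3 are true but do no work; as you noticed, they only bound $r$ from below, so they should be deleted. The actual proof is Step 1 plus the spanning-forest count.

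The paper instead smooths the crossings one at a time, each time choosing the smoothing that keeps $\#D$ unchanged. It notes that a smoothing lowers the number of regions by at most one, and then uses $R\le \#D+1$ for the resulting crossingless diagram.

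Your argument in effect proves the more intrinsic bound $R(D)\le 1+\beta_1(D)$, where $D$ is viewed as a graph with circle components counted as cycles. You build $D$ as a spanning forest, whose complement is connected, and then add $\beta_1(D)$ further arcs and circles. Each addition cuts a single complementary component into at most two pieces.

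What your route buys:
\begin{itemize}
\item It needs only the standard fact that cutting a connected surface along an arc or a circle leaves at most two pieces.
\item It avoids two facts the paper uses without justification. The first is that a smoothing preserving $\#D$ always exists; this follows from a parity argument, since each piece of a diagram component minus a crossing carries an even number of the four half-edges. The second is that a smoothing only merges the regions at the two joined corners and never splits a region.
\item It is exactly the estimate $R(D)\le b_1(\Gamma')+1$ that the paper invokes, without proof, in the spatial-graph section, so your argument supplies that case too.
\end{itemize}
The paper's version is shorter to state and stays within the usual diagrammatic toolkit of smoothings.
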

\begin{proof}
If $c(D)=0$, $R(D) \leq \#D + 1$. 
For a diagram $D$ with $c(D)>0$ we resolve a crossing 
\Crossing to either \Smooth or \Smoothh so that $\#D$ is unchanged. Each resolution reduces the number of regions at most by one so $R(D) \leq c(D)+\# D +1 $.
\end{proof}

As we have mentioned, $x(D)$ reflects property of diagrams some special types of diagram, we often get much better bound and recover several classical results.

\begin{example}
For $F=S^{2}$, an $n$-bridge diagram is a diagram in Figure \ref{fig:bridge-diagram}. For an $n$-bridge diagram $D$, $x(D) \leq n+1$ since $R_0,\ldots R_{n}$ is generating. In particular, this observation and Theorem \ref{theorem:main} recovers standard inequality that $r(G(L))\leq b(L)$ of the bridge index $b(L)$ of a link $L$.
\end{example}

\begin{figure}[htbp]
\begin{center}
\includegraphics*[width=35mm]{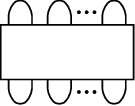}
\begin{picture}(0,0)
\put(-110,75) {$R_0$}
\put(-93,63) {$R_1$}
\put(-65,63) {$R_2$}
\put(-25,63) {$R_n$}
\put(-75,35) {\large $2n$-braid}
\end{picture}
\caption{$n$-bridge diagram and its generating regions} 
\label{fig:bridge-diagram}
\end{center}
\end{figure} 

%

\begin{proof}[Proof of Theorem \ref{theorem:main}]


Let $X=\{R_1,\ldots, R_{x(D)}\} \subset \mathcal{R}(D)$ be a generating subset of regions of $D$.

Let $M_F = M \setminus (F\times(-1,1))$ and $F_\pm = F\times\{\pm 1\} \subset M_F$.
We put $F' = F-N(D) \subset F$.
For a region $R$ of $D$, we denote by $R_{\pm}$ the subset $(R \cap F') \times \{\pm 1\} \subset F_{\pm} \subset M_{F}$. Let $M_1$ be the 3-manifold obtained from $M_F$ by (re-)gluing $(R_1)_{\pm}$. If $F$ is separating, $\pi_1(M_1)$  is an amalgamated free product $\pi_1(M_1)= \pi_1(M_+) \ast_{\pi_1(R_1)} \pi_1(M_-)$.
If $F$ is non-separating $\pi_1(M_1)$ is an HNN extension $\pi_1(M_1)= \pi_1(M_F) \ast_{\pi_1(R_1)}$.
Thus by definition of $r(F)$ it follows that $r(\pi_1(M_1)) \leq r(F \subset M)$.

Before proceeding further we give more precise description and some terminologies for the HNN extension case. We fix a base point $\ast$ on $M_F$ and take a base point $\ast_{R_1}$ on $R_1$.
Take two paths $\gamma_\pm$ in $M_F$ connecting $\ast$ and $(\ast_{R_{1}})_{\pm} = \ast_{R_1} \times\{ \pm 1\}$.
We put
\[ a_1= [\gamma_+ \cdot \overline{\gamma_-}] \in \pi_1(M_1). \]
We call an element $a_1$ \emph{a generator} corresponding to the region $R_1$.
Here for paths $\alpha$ and $\beta$, we use the following notations.
\begin{itemize}
\item $\alpha\cdot \beta$ is the concatenation of paths. (Here we use the convention $\alpha$ first then $\beta$.)
\item $\overline{\alpha}$ is the reverse of the path $\alpha$.
\item $[\alpha]$ represents the element of fundamental group represented by the loop $\alpha$.
\end{itemize}
Then
\[ \pi_1(M_1,\ast) = \langle \pi_1(M_F,\ast), a_1 \: | \: [\gamma_+ \cdot i_+(g)\cdot \overline{\gamma_+}] = [\gamma_-\cdot i_-(g) \cdot \overline{\gamma_-}] \quad ( [g] \in \pi_1(R_1,\ast_{R_1})) \rangle \]
where $i_\pm: R_{1} \rightarrow (R_1)_{\pm} \subset F_{\pm}$ is the inclusion map and $i_{\pm}(\gamma) = i_{\pm} \circ \gamma$.

Although the generator $a_1$ depends on a choice of paths $\gamma_{\pm}$, they are not important. If we take other paths $\gamma'_{\pm}$ and use corresponding generator $a'_1 = [\gamma'_+ \cdot \overline{\gamma'_-}]$, then 
\begin{equation}
\label{eqn:choice} a_1 = (\gamma_+ \cdot \overline{\gamma'_{+}})\cdot (\gamma'_+ \cdot \overline{\gamma'_-})\cdot (\gamma'_- \cdot \overline{\gamma_{-}}) = d_+ a'_1 d_- 
\end{equation}
where $d_+ =[\gamma_+ \cdot \overline{\gamma'_{+}}], d_- = [\gamma'_- \cdot \overline{\gamma_{-}}] \in \pi_1(M_F)$.

Let us back to the proof of theorem.
Let $M_X$ be the 3-manifold obtained from $M_1$ by gluing $(R_i)_{\pm}$ for each region $R_i$ for $i=2,\ldots, x(D)$. Then as in the case of $M_1$ with non-separating $F$ situation, $\pi_1(M_X)$ is an iterated HNN extensions of $\pi_1(M_1)$. In particular, $\pi_1(M_X)$ is generated by
\[ \pi_1(M_1), a_2,\ldots, a_{x(D)} \] 
where $a_i$ is a generator corresponding to the region $R_i$. Hence
\[ r(\pi_1(M_X)) \leq r(\pi_1(M_1)) + (x(D)-1) \leq r(F \subset M) + x(D)-1. \]

Similarly, let $M_D$ be the 3-manifold obtained from $M_X$ by gluing other remaining regions. Thus $M_D$ is noting but the 3-manifold obtained from $M_F$ by gluing $F'_{\pm}$. Thus $\pi_1(M_D)$ is generated by 
\begin{equation}
\label{eqn:surj} \pi_1(M_1), a_2,\ldots, a_{x(D)}, b_1,\ldots, b_{R(D)-x(D)-1}
\end{equation}
where $b_i$ is a generator corresponds to a region in $\mathcal{R}(D) \setminus X$.
 
The link complement $E(L)$ is obtained by gluing a suitable tangle along each neighborhood of crossing points. For each crossing $c$, we take a path $\gamma_{\pm}$ in $M_X$ from the base point $\ast$ to $c \times \{\pm 1\}$. Then we define $d_i \in \pi_1(M_D)$ as an element represented by a loop are obtained by slightly perturbating the loop $\gamma_{+}\cdot(\{c\}\times[1,-1]) \cdot \overline{\gamma_{-}}$ so that it passes four corners around $c$ (see Figure \ref{fig:Dehn}).
Then gluing a tangle induces a relation $d_1d_2^{-1}=d_3d_4^{-1}$.
This is nothing but relation of the famous Dehn presentation of the classical knot groups.
 
By definition of $d_i$, $d_i$ is a generator corresponding to the region $R$ that contains the corner, although it may not be identical with the generators $a_i, b_j$ in \eqref{eqn:surj}.

\begin{figure}[htbp]
\begin{center}
\includegraphics*[width=60mm]{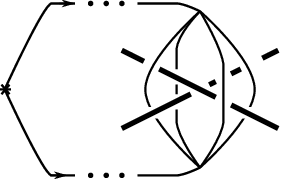}
\begin{picture}(0,0)
\put(-170,90) {$\gamma_+$}
\put(-170,20) {$\gamma_-$}
\put(-100,50) {$d_1$}
\put(-62,70) {$d_2$}
\put(-52,30) {$d_3$}
\put(-16,50) {$d_4$}
\end{picture}
\caption{Knot group relation $d_1d_2^{-1}=d_3d_4^{-1}$} 
\label{fig:Dehn}
\end{center}
\end{figure} 

Let $i_*: \pi_1(M_X) \rightarrow G(L)$
be the natural map that is induced from the inclusion map $i:M_X \rightarrow E(L)$.

\begin{claim}
\label{claim:key}
Assume that a region $R$ of a diagram is derived by a subset $X$ or $R(D)$.
Then the generator $b_R$ of $G(K)$ that corresponds to a region $R$ belongs to $i_*(\pi_1(M_X))$.
\end{claim}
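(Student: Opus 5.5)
We will use the Dehn-type relation $d_1d_2^{-1}=d_3d_4^{-1}$ at a crossing. This relation comes from gluing the crossing tangle into the neighborhood of the crossing $c$ at which $R$ is derived from $X$. Here $X$ should be read as the set of regions already glued, so that the regions containing the other three corners have their generators in $i_*(\pi_1(M_X))$. For the induction used afterwards, one applies the claim with $X$ replaced by $D^{i}(X)$, enlarging $M_X$ step by step. Order the corners $y_1,\dots,y_4$ of $c$ as in Figure~\ref{fig:Dehn}, and suppose $y_4\subset R$ while $y_1,y_2,y_3$ lie in regions of $X$. The relation then solves to
\[ d_4 = d_3^{-1}d_1d_2^{-1}. \]
So once $d_1,d_2,d_3\in i_*(\pi_1(M_X))$, also $d_4\in i_*(\pi_1(M_X))$. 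If instead $R$ contains a different corner, say $y_1$, we solve $d_1=d_3d_4^{-1}d_2$; the other positions are symmetric.

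The first step is to show that each $d_j$ with $y_j$ in a region $R_k\in X$ lies in $i_*(\pi_1(M_X))$. By construction, $d_j$ is a loop that goes from $\ast$ along $\gamma_+$ to $c\times\{1\}$, drops through $R_k$ via the perturbed vertical arc near the corner $y_j$, and returns along $\overline{\gamma_-}$. The paths $\gamma_\pm$ were chosen in $M_X$, and the vertical arc through the corner of $R_k$ is contained in the glued-in piece $R_k\times[-1,1]$. So the whole loop lies in $M_X$, i.e.\ $d_j\in \pi_1(M_X)$, and its image lies in $i_*(\pi_1(M_X))$.

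The second step relates $d_4$ to the generator $b_R$ of the region $R$, which was defined using some other pair of paths $\gamma'_\pm$ to a base point $\ast_R$ of $R$. By \eqref{eqn:choice}, $b_R=e_+\,d_4\,e_-$ with $e_\pm$ represented by loops built from $\gamma_\pm$, $\gamma'_\pm$ and arcs in $F_\pm$ inside $R_\pm$ joining the corner to $\ast_R$. These loops lie in $M_F\subset M_X$. Hence $e_\pm\in\pi_1(M_F)\subset \pi_1(M_X)$, and $b_R\in i_*(\pi_1(M_X))$.

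The main obstacle is bookkeeping rather than any deep step. We must make sure that the perturbed loops near the corners are genuinely homotopic in $E(L)$ to loops in $M_X$ (resp.\ $M_F$). We must also make sure that the relation holds with the chosen signs and orderings, so that it is exactly solvable for the remaining $d_j$. This needs a careful local picture of the tangle at the crossing. Once it is in place, the relation lets us express the missing corner generator by the other three, and the claim follows.
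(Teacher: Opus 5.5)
Your proposal is correct and follows the paper's argument. The loops defining $d_1,d_2,d_3$ run through glued-in regions of $X$, so these elements lie in $i_*(\pi_1(M_X))$; the Dehn relation $d_1d_2^{-1}=d_3d_4^{-1}$ then forces $d_4$ into this subgroup, and the change-of-paths identity \eqref{eqn:choice} passes from $d_4$ to $b_R$.

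There are two harmless slips. Solving the relation actually gives $d_4^{-1}=d_3^{-1}d_1d_2^{-1}$ (i.e.\ $d_4=d_2d_1^{-1}d_3$), which is fine since subgroups are closed under inverses. Also, the correcting loops $e_\pm$ lie in $M_X$ but not necessarily in $M_F$: when $F$ separates, $M_F$ is disconnected and $\gamma_-$ must cross a glued region. Lying in $M_X$ is all you need.
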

\begin{proof}
Since $R$ is derived from $X$, by Definition \ref{definition:derived}, there exists a crossing $c$ of $D$ such that among its four corners $x_1,x_2,x_3,x_4$, three of them (say $x_1,x_2,x_3$) belong to regions in $X$ and the remaining one (say $x_4$) belongs to $R$. This implies that $d_1,d_2,d_3 \in i_*(\pi_1(M_X))$. Hence by the relation $d_1d_2^{-1}=d_3d_4^{-1}$ it follows that $d_4 \in i_*(\pi_1(M_X))$. 
Although the element $b_R$ which we take as a generator of $\pi_1(M_D)$ in \eqref{eqn:surj} may be different from $d_4$, as we have seen in \eqref{eqn:choice}, we may write $b_R = d_+ d_4 d_-$ for some $d_+, d_- \in i_*(\pi_1(M_X))$. Thus $b_R \in i_*(\pi_1(M_X))$.

\end{proof}

By Claim \ref{claim:key}, the map $i_*$ is surjective homomphism. Therefore we conclude $r(G(L)) \leq r(\pi_1(M_X)) \leq r(F \subset M)+x(D)-1$ as desired.  

\end{proof}

\subsection{Spatial graph}

As we mentioned in introduction, the group theoretical method can be applied to more general settings without difficulties. As a sample illustration, we briefly discuss spatial graphs.

For a spatial graph $\Gamma$ and 2-sided surface $F \subset M$, the notion of projectible and $F$-diagram are defined in an obvious manner.
We remark that we distinguish crossings of diagram and (4-valent) vertices of the graph. To make argument simple, in the following we assume that $\#D=1$, namely, the diagram is connected.

For an $F$-diagram $D$ of a spatial graph, the generating number $x(D)$ is defined in the same way as link case. Since the number of cornerless region, a region whose boundary does not have corner (crossings) is at most $1 + b_1(\Gamma)$ where $b_1(\Gamma)$ is the 1st betti number of $G$, by the same proof as Proposition \ref{prop:generating-number} we get
\[ x(D) \leq \frac{1}{2}R(D) + \frac{b_1(\Gamma)+1}{2}. \]
Similarly, for a connected $F$-diagram $D$  of a spatial graph $\Gamma$, by viewing $D$ as a spatial graph $\Gamma'$ so that each crossing is 4-valent vertex, we get
\[ R(D) \leq b_1(\Gamma')+1 \leq b_1(\Gamma)+c(D) +1. \]

Thus we get the following, where $G(\Gamma)$ is the fundamental group of $M\setminus \Gamma$.
\begin{theorem}
\label{theorem:main-spatial_graph}
For a connected $F$-diagram $D$ of a spatial graph $\Gamma$ in $M$, 
\[ r(G(\Gamma)) \leq r(F \subset M)+ x(D) -1 \]
holds. In particular,
\[ c(\Gamma;F\subset M) \geq 2(r(G(\Gamma))- r(F \subset M)-b_1(\Gamma)). \]
\end{theorem}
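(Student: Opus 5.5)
The plan is to repeat the proof of Theorem \ref{theorem:main} essentially verbatim, with $E(L)$ replaced by the exterior $M\setminus N(\Gamma)$. Then I combine the result with the two counting estimates stated just before the theorem.

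For the first inequality, fix a generating set $X=\{R_1,\dots,R_{x(D)}\}$ of regions and form $M_F$, $M_1$ and $M_X$ exactly as before. The bound $r(\pi_1(M_X))\le r(F\subset M)+x(D)-1$ uses only the amalgamated product/HNN description of regluing the regions, so it holds without change. The exterior of $\Gamma$ is obtained from $M_D$ (all regions reglued) by inserting, in a neighbourhood of each crossing, the complement of two arcs crossing over one another. In a neighbourhood of each genuine vertex we insert the complement of a vertical star instead.

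The key step is to check that only crossings are needed for Claim \ref{claim:key}. At a crossing, the local piece still imposes the Dehn relation $d_1d_2^{-1}=d_3d_4^{-1}$, so the argument of the claim goes through. The derivation rule in the definition of $x(D)$ only uses crossings, so vertices never need to be used. Vertex neighbourhoods can only add relations or identify region generators; they never introduce new generators beyond those of the regions. To see this, note that the complement of a star in a ball, glued along its boundary, contributes no new generators: its $\pi_1$ is free, generated by the meridians, which are products of adjacent region generators.

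It follows that $\pi_1(M_D)$ surjects onto $G(\Gamma)$. Inductively along $D^i(X)$, every region generator lies in $i_*(\pi_1(M_X))$, so $i_*$ is surjective. This gives $r(G(\Gamma))\le r(F\subset M)+x(D)-1$. I expect the main obstacle to be this surjectivity check at the vertices. I would handle it by a van Kampen argument: gluing a handlebody (the star complement) along a surface whose $\pi_1$ surjects onto the handlebody's $\pi_1$ adds only relations.

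For the crossing bound, take $D$ to be a diagram realizing $c(\Gamma;F\subset M)$. Combine the inequality just proved with $x(D)\le \frac12R(D)+\frac{b_1(\Gamma)+1}{2}$ and $R(D)\le b_1(\Gamma)+c(D)+1$. This gives
\[ r(G(\Gamma))\le r(F\subset M)+\frac{c(D)}{2}+b_1(\Gamma)+1-1, \]
which rearranges to the stated bound $c(\Gamma;F\subset M)\ge 2(r(G(\Gamma))-r(F\subset M)-b_1(\Gamma))$. If $\Gamma$ is not projectible, the left-hand side is $\infty$ and there is nothing to prove.
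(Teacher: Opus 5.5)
Your proposal is correct and follows the paper's route. You rerun the proof of Theorem~\ref{theorem:main} with the graph exterior, and then combine the result with the two estimates $x(D)\le\frac12R(D)+\frac{b_1(\Gamma)+1}{2}$ and $R(D)\le b_1(\Gamma)+c(D)+1$; your van Kampen remark that the star complements at vertices only add relations makes explicit a point the paper leaves unsaid. As in the paper, the final step tacitly assumes that the crossing-minimizing diagram is connected, because those two estimates are only stated under the standing assumption $\#D=1$.
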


\section{Branched surface case}
\label{section:branch}

Let $F$ be a properly embedded, connected branched surface $F$ in $M$. That means,  at each point $p$ of $F \subset M$, $F$ is locally modeled by one of the Figure \ref{fig:branched-surface}.
The \emph{branch locus} $B$ of $F$ is the set of point $p \in F$ whose neighborhood is modeled by (ii),(iii),(v) in Figure. A connected component of $F-B$ is called \emph{sector}. We say that a branched surface $F$ is \emph{2-sided} if each sector $S$ is 2-sided. We denote by $N(F)$ the regular neighborhood of $F$ which we view as $\bigcup_{S:\mbox{sector}} S\times[-1,1]$.

\begin{figure}[htbp]
\begin{center}
\includegraphics*[width=85mm]{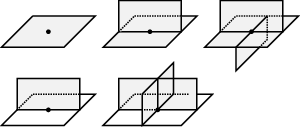}
\begin{picture}(0,0)
\put(-255,102) {(i)}
\put(-165,102) {(ii)}
\put(-85,102) {(iii)}
\put(-255,45) {(iv)}
\put(-160,45) {(v)}
\put(-152,4) {$\partial M$}
\put(-233,4) {$\partial M$}
\end{picture}
\caption{Local model of branched surface} 
\label{fig:branched-surface}
\end{center}
\end{figure} 

We say that a link $L$ in $M$ is \emph{projectible} on $F$ if we can put $L \in N(F)$. For projectible link $L$, by taking the projection on $F$ so that double point singularities (crossing point) is disjoint from a neighborhood of the branch locus $B$, we get a link diagram $D$ on $F$. The $F$-crossing number $c(L;F\subset M)$ is defined in the similar manner.

We observe the following which makes a sharp contrast with Corollary \ref{cor:unbounded}.

\begin{theorem}
\label{theorem:appendix}
For every connected 2-sided branched surface $F$ in $M$, $c(L;F\subset M)=0$ for every projectible link $L$, if the branch locus of $F$ is non-empty.
\end{theorem}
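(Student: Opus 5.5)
We take an arbitrary diagram $D$ of $L$ on $F$ with $c(D)$ crossings and remove its crossings one at a time by an isotopy of $L$ inside $N(F)$. Each crossing is slid along the diagram until it passes through the branch locus, where it disappears. Since every crossing of $D$ is modified independently and the count never increases, induction on $c(D)$ gives $c(L;F\subset M)=0$.

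The local model is a neighbourhood of a branch point of type (ii) in Figure \ref{fig:branched-surface}. There two sheets $S_1$ and $S_2$ merge into one sheet $S_0$. The product structure of $N(F)$ near the branch locus is $S_0\times[-1,1]$ on one side. On the other side it is $S_1\times[-1,1]\cup S_2\times[-1,1]$, with $S_1\times[-1,1]$ lying above $S_2\times[-1,1]$ in the $[-1,1]$ direction. So a crossing on $S_0$ between an over-strand and an under-strand can be pushed across the branch locus with the over-strand going into $S_1$ and the under-strand into $S_2$. After this push the two strands project to different sheets, and the double point is gone. Equivalently, near the branch locus, two arcs at different heights of $S_0\times[-1,1]$ can be separated into the two sheets.

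The global steps, in order, are as follows.

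First, since $F$ is connected and the branch locus $B\neq\emptyset$, for any crossing $c$ of $D$ choose a path $\delta$ in $F$ from $c$ to a point $b\in B$. By general position, $\delta$ avoids the other crossings and meets $D$ transversely.

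Second, isotope $L$ inside $N(F)$ by a finger move. Drag a small neighbourhood of the crossing, meaning a pair of short subarcs (over and under), along $\delta$ until it reaches $b$. This drags two parallel arcs along $\delta$, and they keep their relative heights. Wherever $\delta$ crosses another strand of $D$, the dragged arcs pass over or under that strand at the level of $N(F)$. This creates pairs of new crossings, since the dragged double strand is a ``finger''.

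Third, at $b$, separate the two strands into the two sheets as in the local model. This removes $c$, leaving only the finger crossings.

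The main obstacle is the new crossings created along $\delta$. To avoid them, I would instead move the branch locus, or equivalently the strand portion, as follows. Drag a small loop of one strand near $c$ along $\delta$ to $B$, push it onto the other sheet $S_2$, and bring it back along $\delta$ inside the $S_2$ layer. Since $S_2$ is a different sheet, the loop then projects disjointly. Alternatively, and more simply, thicken: any projectible link can be isotoped so that the entire diagram lies near the branch locus. Slide the whole of $D$ through $F$ into a small neighbourhood of a branch arc by shrinking $D$ into a disc, which is possible after pushing everything over one sector. There, each crossing is resolved locally by sending the upper strand to $S_1$ and the lower to $S_2$. Only finitely many height levels occur, and these are separated by repeating the splitting.

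I expect the rigorous bookkeeping of this to be the hard step. In particular, one has to check that the sheet $S_2$ over which the pushed-off strands return also lies in $N(F)$ without creating crossings with the strands already projected there. I would first try to order the strands by height and use that the sheets $S_1$ and $S_2$ give two disjoint ``copies'' of a neighbourhood of $\delta$.
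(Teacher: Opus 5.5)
\paragraph{The gap.}
Your local model at a branch point is fine, and your opening sentence (slide each crossing along the diagram) points the right way. The global reduction you actually carry out, however, has a genuine gap. The claim that ``the count never increases'' is exactly what needs proof, and your own second step contradicts it. Dragging the crossing along a path $\delta$ that is transverse to $D$ creates new crossings every time $\delta$ meets $D$. So nothing decreases, and the induction on $c(D)$ does not run.

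Neither of your repairs works:
\begin{itemize}
\item The two-layer structure $S_1\times[-1,1]\cup S_2\times[-1,1]$ exists only on the two-sheet side of $B$, and only near $B$. Over $\delta$, $N(F)$ is a single $I$-bundle. There is no ``$S_2$ layer'' along $\delta$ in which to bring a loop back, and no pair of disjoint copies of a neighbourhood of $\delta$.
\item Shrinking $D$ into a disc near a branch arc would isotope $L$ into a ball in $N(F)$. This is false in general: it fails whenever some component of $L$ is homotopically nontrivial in $N(F)$.
\end{itemize}

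\paragraph{The missing idea.}
As in the paper's proof, bring crossings to $B$ along the diagram itself, after a normalization.
\begin{itemize}
\item First, by finger moves of strands along paths in $F$, make every connected component of $D$ meet $B$. The crossings this creates are harmless.
\item While $c(D)>0$, take a point of $D\cap B$ on a component of $D$ that has crossings. Follow the strand to the first crossing $c$, and let $p$ be the last point of $D\cap B$ before $c$. The subarc from $p$ to $c$ contains no crossings.
\item A parallel push-off of this subarc therefore gives an arc $\gamma$ from $c$ to $B$ whose interior misses $D$. Push the crossing along $\gamma$ and across $B$ as in your local model: the over-strand goes to the upper sheet, the under-strand to the lower sheet. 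This removes $c$ without creating any new crossing.
\item If $\gamma$ reaches $B$ from a two-sheet side, first push the crossing across into the single sheet, which creates no crossings. Then push it back across $B$ at a neighbouring point.
\end{itemize}
Afterwards both strands through $c$ still meet $B$, so every component of the new diagram still meets $B$. Induction on $c(D)$ then yields a crossingless diagram.
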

\begin{proof}
We put a diagram $D$ so that each connected component of $D$ intersects with a branch locus. Then crossing points are removed by pushing a crossing to a branch locus as shown in Figure \ref{fig:reduce_crossing}. 
\end{proof}
\begin{figure}[htbp]
\begin{center}
\includegraphics*[width=85mm]{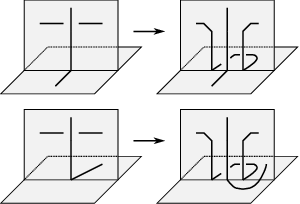}
\begin{picture}(0,0)
\end{picture}
\caption{Push-down move: Removing crossings of diagram on branched surface} 
\label{fig:reduce_crossing}
\end{center}
\end{figure} 
We call the operation in Figure \ref{fig:reduce_crossing} the \emph{push-down move}.

\section{Three-page index}\label{section:3-page}

Theorem \ref{theorem:appendix} shows the following.

\begin{corollary}[Every link admits a three-page presentation \cite{dy}]
\label{cor:dynnikov}
Every link in $\R^{3}$ is put on a three-page branched surface $F_{\perp}=F_1\cup F_2 \cup F_3$, where 
\[ F_1=\{(x,y,z) \: | \: z=0,x \leq 0\}, F_2=\{(x,y,z) \: | \: z=0,x \geq 0\}, F_3=\{(x,y,z)\: | \: y=0, z\geq0\}\]
\end{corollary}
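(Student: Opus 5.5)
The plan is to obtain the corollary as a direct application of Theorem \ref{theorem:appendix} to the three-page branched surface $F_{\perp}$. For this, three facts are needed: $F_{\perp}$ is a connected 2-sided branched surface with non-empty branch locus; every link in $\R^{3}$ is projectible on $F_{\perp}$; and a diagram with no crossings is exactly a presentation of the link sitting on $F_{\perp}$.

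\emph{Step 1: the branch structure.} $F_{\perp}$ is the union of three half-planes meeting along the $y$-axis $B=\{x=z=0\}$. Near $B$, the pages $F_1$ and $F_2$ together form the plane $z=0$, and $F_3$ is attached to it along $B$. Smoothing $F_3$ so that it meets the plane tangentially along $B$ (bending it toward, say, $F_2$) gives the local model (ii) of Figure \ref{fig:branched-surface}. So $B$ is the branch locus and it is non-empty. Each sector is a half-plane, hence 2-sided, and $F_{\perp}$ is connected. To match the compact setting of the paper, I would work in a large ball containing the link, or in $S^{3}$ with the point at infinity adjoined. This does not affect anything that happens near the link.

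\emph{Step 2: projectibility.} I take any link $L$ and put it in general position with respect to the plane $z=0$. Its ordinary projection to that plane is a link diagram $D$ lying in $F_1\cup F_2$, so $L$ lies in a thin neighbourhood $(F_1\cup F_2)\times[-1,1]\subset N(F_{\perp})$. Hence $L$ is projectible on $F_{\perp}$.

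\emph{Step 3: removing crossings.} Before applying Theorem \ref{theorem:appendix}, I translate $D$ within the plane so that every connected component of $D$ meets the line $B$. This is an isotopy of $L$. Its only role is to meet the hypothesis used in the proof of Theorem \ref{theorem:appendix}, namely that each component of the diagram intersects the branch locus. The push-down moves then slide each crossing along an arc of $D$ to $B$. There the over-strand is lifted onto the sector $F_3$ and the under-strand stays on $F_1\cup F_2$, so the crossing disappears. The result is a diagram on $F_{\perp}$ with $c=0$, that is, $L$ isotopic to an embedded closed curve on $F_{\perp}$. This is exactly a three-page presentation.

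\emph{Expected obstacle.} The main subtlety is the local check of the push-down move for this specific configuration, where the plane $z=0$ is split by $B$ into two pages and $F_3$ is attached on the upper side. The point to verify is that a crossing can always be pushed to $B$ with the correct strand going onto $F_3$. If the under-strand is the one that needs to go to the extra sheet, I would first rotate the local picture by pushing the crossing across $B$ from the other page, $F_1$ versus $F_2$. Alternatively, I would use that over/under relative to $z=0$ can be exchanged by approaching $B$ from the opposite side. Beyond this local bookkeeping, the corollary follows immediately from Theorem \ref{theorem:appendix}.
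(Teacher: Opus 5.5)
Your proposal is correct and matches the paper's argument: the paper obtains the corollary directly from Theorem \ref{theorem:appendix} applied to $F_{\perp}$, and that theorem's proof likewise arranges that every component of the diagram meets the branch locus and then removes crossings by push-down moves. Two harmless slips: with the formulas as printed, $F_3$ is attached to the plane $\{z=0\}$ along the $x$-axis (the paper's binding $A$), not the $y$-axis; and your ``expected obstacle'' never arises, since $F_3$ lies in $z\geq 0$ and it is therefore always the over-strand that is lifted onto it (neither of your side-switching tricks would change which strand is on top anyway).
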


Based on this observation, the three-page index of links are defined as follows.

\begin{definition}[Three-page number/index]
\label{definition:3-page}
Let $F_{\perp}$ be the three-page branched surface in Corollary \ref{cor:dynnikov}.
We call the $x$-axis $A=F_1\cap F_2 \cap F_3$ the \emph{binding} of $F_{\perp}$. For an $F_{\perp}$-diagram $D$ of a link $L$, the \emph{three-page number} $\alpha_3(D)$ is defined by 
\[ \alpha_3(D)= \#(D \cap A).\]
The \emph{three-page index}\footnote{For the unknot $K$, $\alpha_3(K)=0$ in our definition, since we do not require $D \cap A$ consists of arcs. We adopt this convention so that we do not need to distinguish unknots in our theorem.} $\alpha_3(L)$ of a link in $\R^{3}$ is 
\[ \alpha_3(L)=\min\{\alpha_3(D) \: | \: D \mbox{ is an }F_{\perp}\mbox{-diagram of } L \mbox{ with } c(D)=0\}\]
\end{definition}

In the following, to make argument simple we restrict our attention to connected diagrams. 

In \cite{yo}, it is shown that 
\begin{equation}
\label{eqn:Yoo}
\alpha_3(L) \leq 3c(L)-1
\end{equation}
for all non-split non-trivial link $L$ which is not the Hopf link, and asked whether this can be improved, whether $\alpha_3(K) \leq C_1c(L) + C_2$ holds for some constant $C_1<3$ and $C_2$ \cite[Conjecture 1]{yo}.

The push-down move in Figure \ref{fig:reduce_crossing} is an operation on $F_\perp$-diagram which decreases the crossing number $c(D)$ by one, at the cost of increasing three-page number $\alpha_3(D)$ by two or three. Furthermore, the case where the three-page number increases by three is somewhat special so the push-down move method often allows us to find a crossingless $F_{\perp}$-diagram with three-page number smaller than the bound in \eqref{eqn:Yoo}. Indeed, it is an easy exercise to prove \eqref{eqn:Yoo} using push-down moves.

We also remark if $D$ is non-alternating and over/under crossing appears consecutively then push-down move only increase the crossing number $c(D)$ by two, possibly except the first crossing (see Figure \ref{fig:reduce_crossing_succ}). 
Thus, if a link $L$ has a minimum crossing diagram which is far from alternating, $\alpha_3(L)$ will be substantially smaller than $3c(L)-1$. 

\begin{figure}[htbp]
\begin{center}
\includegraphics*[width=85mm]{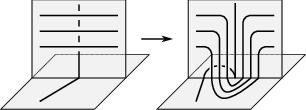}
\begin{picture}(0,0)
\end{picture}
\caption{Push-down move when under or over crossing appears consecutively.} 
\label{fig:reduce_crossing_succ}
\end{center}
\end{figure}

To give a systematic count of how many crossings can be removed by increasing two three-page number for general diagrams, we use the following quantity from the graph theory.

For a connected link diagram $D$ in $\R^2$, take a checkerboard coloring. The \emph{Tait graph} $\mathbb{G}(D)$ is a planar graph whose vertices are the set of black-colored regions and whose edges are crossings of $D$\footnote{Usually the edges of Tait graphs are signed according to the sign of crossings, but in our argument we do not use signs}. A subset of $B$ of the vertices of a graph $\mathbb{G}$ is \emph{independent} if no two elements of $B$ are connected by an edge of $\mathbb{G}$. The \emph{independence number} $\alpha(\mathbb{G})$ is the maximum cardinality of independent sets of $\mathbb{G}$.

\begin{theorem}
\label{theorem:3-page}
Every connected diagram $D$ of a link $L$ in $\R^{3}$ can be converted to an $F_{\perp}$-diagram $D^{*}$ of $L$ with $c(D^{*})=0$ and 
\[ \alpha_3(D^*) \leq 3c(D) + 1 - \alpha(\mathbb{G}(D)).\]
\end{theorem}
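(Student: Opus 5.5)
We plan to build $D^*$ from $D$ in two stages. First, we place $D$ on $F_\perp$ so that it already touches the binding $A$ in a controlled way. Then we remove all crossings by push-down moves (Theorem \ref{theorem:appendix} and Figure \ref{fig:reduce_crossing}), counting the resulting intersections with $A$.

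\emph{Initial placement.} We view $\R^2$ as the plane $z=0$, that is, $F_1\cup F_2$, with $D$ lying inside it. We choose a simple arc, or a line through the plane, that we isotope to be the binding $A$, and we position it so that $D$ meets $A$ transversally. If $D$ is disjoint from $A$, the push-down move cannot start. So at the beginning we arrange that $D$ meets $A$ near one chosen crossing, at a cost of at most one extra binding point. This is where the additive constant $+1$ should come from.

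\emph{Processing crossings.} We order the crossings $c_1,\dots,c_n$, with $n=c(D)$, and push them down one at a time toward $A$. A crossing can be pushed onto the binding once an arc adjacent to it already reaches $A$. We then slide the crossing along the diagram to $A$ and resolve it there. The over-strand stays in the page $F_3$, and the under-strand passes through $F_1$ and $F_2$.

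\emph{Cost of each push-down move.} The paper's discussion shows a push-down costs at most three new binding points, and only two in favourable situations. We want to identify the favourable situation in terms of the Tait graph $\mathbb{G}(D)$. Take an independent set $B$ of black regions with $|B|=\alpha(\mathbb{G}(D))$. Since $B$ is independent, no crossing has black corners in two distinct regions of $B$. For each region $R\in B$, we process one designated crossing on $\partial R$ last among the crossings on $\partial R$. By then, the boundary of $R$ has already been routed through the pages, and the needed arc along $A$ can be shared with the adjacent binding points. This is analogous to the consecutive over/under situation in Figure \ref{fig:reduce_crossing_succ}, and we expect it to save one binding point. Independence ensures these designated crossings are distinct, so the savings for different regions of $B$ do not interfere. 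Summing, we get
\[ \alpha_3(D^*) \le 1 + 3n - \alpha(\mathbb{G}(D)). \]
Since $D$ is connected, every black region has at least one crossing on its boundary, so a designated crossing exists for every $R\in B$.

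\emph{Main obstacle.} The hard step is the local lemma: when a crossing closes off a black region whose other boundary arcs already hit $A$, the push-down move costs only two binding points. Proving this requires a careful picture of how the region's boundary sits relative to the binding after the previous moves. I would first try the case where $R$ is a bigon or a monogon, then argue inductively along $\partial R$. I would also check that the ordering of the crossings can be made consistent with a single sweep along $A$.
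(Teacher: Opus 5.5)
There is a genuine gap. Your accounting matches the statement: one extra binding point at the start, at most three per push-down, and a saving of one for each region of a maximum independent set $B$ give $3c(D)+1-\alpha(\mathbb{G}(D))$, which is the paper's count $4+2\alpha+3(c(D)-1-\alpha)$. But the saving is the whole content of the theorem, and you only assert it: you say you ``expect'' the last crossing on $\partial R$ to cost two and leave this local lemma open.

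The mechanism you describe also does not produce that situation. Removing a designated crossing of $\partial R$ last among the crossings of $\partial R$ gives no control over how $R$ sits against the binding. If the other corners of $R$ are removed at scattered times, interleaved with moves elsewhere, $R$ may meet $A$ in several arcs, or in an arc containing points of $D$. Then there is no reason for the final push-down to cost less than three.

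The missing idea is the invariant the paper maintains. Push down until some region $R\in B$ first reaches $A$. At that moment $R\cap A$ is a single subarc of $A$ disjoint from the diagram, and no other region of $B$ meets $A$. Then remove the corners of $R$ consecutively, checking that each push-down preserves this property, until $R$ has one corner left. The diagram-free arc $R\cap A$ lets this last crossing be removed at cost two, which also eliminates $R$. Then repeat with the next region of $B$.

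This is also where independence is really used; making your designated crossings distinct is not enough. After reducing $D$, so that $\mathbb{G}(D)$ has no loops, a crossing at a corner of $R$ has its other black corner in a region adjacent to $R$ in $\mathbb{G}(D)$, hence not in $B$. White regions are not in $B$ at all. So while $R$ is processed, the other regions of $B$ stay off the binding, and each can later be brought to $A$ alone in the single-arc position.

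Your initial placement also needs repair. With $D$ in the plane $F_1\cup F_2$ and $A$ a line in it, $D$ meets $A$ transversally in an even number of points, so ``at most one extra binding point'' does not make sense. Saying the over-strand ``stays in $F_3$'' also contradicts $D$ lying in $F_1\cup F_2$. The paper instead puts $D$ in the vertical page $F_3$ and pushes one crossing down to get $D'$ with $\alpha_3(D')=4$ and $c(D')=c(D)-1$. This first crossing costing four is where the $+1$ comes from.
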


\begin{proof}

Recall that $F_{\perp}$ is the union of half-planes $F_1,F_2,F_3$ whose intersection is the axis $A$. We call $F_3 = \{(x,y,z)\: | \: y=0, z\geq0\}$ \emph{the vertical half plane}.

We assume that $D$ has at least two crossings because otherwise the assertion is obvious. Also, with no loss of generality, we may assume that $D$ is reduced.
Take an independent set $B$ of the Tait graph $\mathbb{G}(D)$ with $\#B = \alpha(G(\mathbb{G}))$.

We put the diagram $D$ as a diagram in the vertical half plane $F_3=\{(x,y,z)\: | \: y=0, z\geq0\}$ of $F_{\perp}$. By picking one crossing of $D$ we get an $F_{\perp}$-diagram $D'$ with $\alpha_3(D')=4$, $c(D')=c(D)-1$ as in Figure \ref{fig:3-page_1}.

\begin{figure}[htbp]
\begin{center}
\includegraphics*[width=80mm]{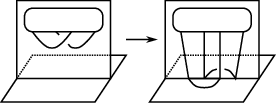}
\begin{picture}(0,0)
\put(-205,26) {$F_3$}
\put(-230,20) {$A$}
\end{picture}
\caption{$F_{\perp}$-diagram $D'$} 
\label{fig:3-page_1}
\end{center}
\end{figure} 

In the following, by region of $F_{\perp}$-diagram $D$, we mean the regions of $D \cap F_3$, the connected components of $F_3 \setminus (D \cap F_3)$.

We apply the push-down moves until some region $R$ in $B$ shares the boundary with the binding $A$ (see Figure \ref{fig:3-page_2}). A key feature is that the $A \cap B$ is an single subarc of $A$ which is disjoint from the diagram $D$. Furthermore, no other region $R$ in $B$ intersects with $A$.

\begin{figure}[htbp]
\begin{center}
\hspace{-100pt}
\includegraphics*[width=80mm]{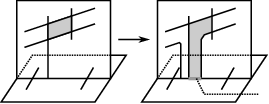}
\begin{picture}(0,0)
\put(-5,10) {Disjoint from}
\put(-5,0){diagram}
\put(-187,60) {$R$}
\put(-68,57) {$R$}
\end{picture}
\caption{Region which is a member of independent set. (We omit to write over-under information which is irrelevant).} 
\label{fig:3-page_2}
\end{center}
\end{figure}

Then by push-down moves we remove the crossings that appear as the corners of the region $R$ until $R$ contains one crossing. During this procedure, the property that $A \cap B$ is an single arc disjoint from the diagram is preserved (see Figure \ref{fig:3-page_3}). Furthermore, since we are focusing on regions that forms an independent set, the other regions in the independent set $B$ remains to be disjoint from the axis $A$.

\begin{figure}[htbp]
\begin{center}
\includegraphics*[width=65mm]{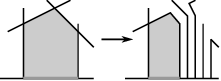}
\begin{picture}(0,0)
\put(-155,20) {$R$}
\put(-200,0) {$A$}
\put(-55,20) {$R$}
\end{picture}
\caption{Removing crossing appearing as corners of the region $R$ until it has single corner. The intersection $R\cap A$ (bold gray arc) is a single arc disjoint from the diagram. (We omit to write over-under information which is irrelevant).} 
\label{fig:3-page_3}
\end{center}
\end{figure} 

Finally, the last crossing point can be removed by increasing the three-page number by two, as shown in Figure, thanks to the property that $A \cap B$ is disjoint from other part of the diagram (see Figure \ref{fig:3-page_4}). In particular, this eliminates the region $R$ from the diagram.

\begin{figure}[htbp]
\begin{center}
\includegraphics*[width=65mm]{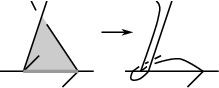}
\begin{picture}(0,0)
\put(-150,20) {$R$}
\end{picture}
\caption{The last crossing can be removed by increasing three-page number by two, even though push-down move forced to increase three-page number by three.} 
\label{fig:3-page_4}
\end{center}
\end{figure} 

After removing all the crossings in $R$, we are in the situation that remaining regions $R$ in $B$ are disjoint from the axis $A$. Then we repeat these procedures until we remove all the crossings.

This argument shows that, among $c(D)-1$ crossings of $D'$, at least $\alpha(\mathbb{G}(D))$ crossings can be removed by increasing three-page number only by two.
Therefore we get an $F$-diagram $D^{*}$ whose three-page number is at most 
\[ \alpha_3(D^{*}) \leq 4+ 2\alpha(\mathbb{G}(D)) + 3 (c(D)-1-\alpha(\mathbb{G}(D))) = 3c(D) +1  -\alpha(\mathbb{G}(D)).\]
\end{proof}

\begin{proof}[Proof of Theorem \ref{theorem:3-page-inequality}]
From the four-color theorem \cite{ah,rsst}, it follows that for a planar graph $\mathbb{G}$, 
\[ \frac{1}{4}\#E(\mathbb{G})\leq \alpha(\mathbb{G}) \]
where $E(\mathbb{G})$ denotes the set of edges of $\mathbb{G}$. 

Take a diagram $D$ so that $c(D)=c(L)$. Since the number of regions of $D$ is $c(L)+2$, by suitably choosing a checker board coloring, 
\[ 1+ \left\lceil \frac{c(L)}{2} \right\rceil \leq \#E(\mathbb{G}(D)).\]
Therefore by Theorem \ref{theorem:3-page} we conclude
\[ \alpha_3(L) \leq 3c(L) +1 - \alpha(\mathbb{G}(D)) \leq \frac{23}{8}c(L) + \frac{3}{4}.\]
\end{proof}

\end{document}